\tikzset{every picture/.style={line width=0.7pt}}
\newcommand{\mycircle}[1]
{\draw[fill=white,line width=1pt] (#1) circle[radius=1.0mm]}
\DeclareMathOperator{\Conj}{Conj}
\DeclareMathOperator{\length}{length}
\newtheorem{theorem}{Theorem}
\newtheorem{lemma}[theorem]{Lemma}
\theoremstyle{definition}
\newtheorem{note}[theorem]{Note}
\newcommand{\swing}{\mathbin{\raisebox{2.0pt}{\rotatebox{160}{$\curvearrowleft$}}}}
\begin{document}
\title[Congruences and prime-perspectivities]
{Congruences and prime-perspectivities\\in finite lattices} 
\author{G. Gr\"{a}tzer} 
\address{Department of Mathematics\\
   University of Manitoba\\
   Winnipeg, MB R3T 2N2\\
   Canada}
\email[G. Gr\"atzer]{gratzer@me.com}
\urladdr[G. Gr\"atzer]{http://server.math.umanitoba.ca/homepages/gratzer/}

\date{\today}
\keywords{Prime-perspective, congruence, congruence-perspective, perspective, prime interval.}
\subjclass[2010]{Primary: 06B10}

\begin{abstract}
In a finite lattice, a congruence spreads 
from a prime interval to another 
by a sequence of congruence-perspectivities 
through \emph{intervals of~arbitrary size},
by a 1955 result of J. Jakub\'ik. 

In this note, I introduce 
the concept of \emph{prime-perspectivity} 
and prove the Prime-projectivity Lemma: 
a congruence spreads from a prime interval to another
by a sequence of prime-perspectivities through \emph{prime intervals}.

A planar semimodular lattice is \emph{slim} 
if it contains no $\mathsf{M}_3$ sublattice. 
I~introduce the Swing Lemma,
a very strong version of the Prime-projectivity Lemma
for slim, planar, semimodular lattices.
\end{abstract}

\maketitle

\section{Introduction}
To describe the congruence lattice, 
$\Con L$, of a finite lattice $L$, 
note that a~prime interval $\fp$ 
generates a join-irreducible congruence, 
$\con{\fp}$, and conversely; 
see, for~instance, the discussion on pages 213 and 214 of LTF 
(reference \cite{LTF}). 
So if we can determine when $\con{\fp} \geq \con{\fq}$
(that is, $\fq$ is collapsed by $\con{\fp}$) holds
for the prime intervals $\fp$ and $\fq$ of $L$, 
then we know the distributive lattice $\Con L$ up to isomorphism.
  
This is accomplished by the following result 
of J.~Jakub\'ik \cite{jJ55}
(see Lemma 238 in~LTF), 
where $\cproj$ is congruence-projectivity, 
see Section~\ref{S:Preliminaries}.
 
\begin{lemma}\label{L:cproj}
Let $L$ be a finite lattice and let $\fp$ and $\fq$ be prime intervals in $L$.
Then~$\fq$ is collapsed by $\con{\fp}$ if{f} $\fp \cproj \fq$.
\end{lemma}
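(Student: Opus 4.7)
The plan is to prove the two implications separately. The direction $\fp \cproj \fq \Rightarrow \con{\fp}$ collapses $\fq$ is a routine induction on the length of the congruence-projectivity sequence witnessing $\fp \cproj \fq$; the atomic step is that a congruence $\Theta$ collapsing an interval $[a,b]$ automatically collapses any interval $[a\vee x,\,b\vee x]$ perspective up to it, by substitutivity of $\vee$, and dually for perspectivities down. Since $\con{\fp}$ collapses $\fp$ by definition, the induction delivers collapse of every interval $\cproj$-reachable from $\fp$, in particular $\fq$.

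For the converse, I would define a binary relation $\Theta$ on $L$ by declaring $u \equiv v \pmod{\Theta}$ iff every prime interval $\fr \subseteq [u\wedge v,\,u\vee v]$ satisfies $\fp \cproj \fr$. Granting that $\Theta$ is a congruence, it collapses $\fp$ itself (the only prime interval in $\fp$ is $\fp$, and $\fp \cproj \fp$ trivially), so $\Theta \geq \con{\fp}$. By hypothesis $\con{\fp}$ collapses $\fq$, hence so does $\Theta$; but $\fq$ is prime, so unwinding the definition of $\Theta$ gives exactly $\fp \cproj \fq$.

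The main obstacle is verifying that $\Theta$ really is a congruence. Reflexivity and symmetry are immediate. Transitivity reduces to the observation that for $u \leq v \leq w$ the prime intervals of $[u,w]$ are the disjoint union of those of $[u,v]$ and of $[v,w]$, so $\Theta$ is additive along comparable chains; the non-comparable case follows by the standard meet/join manipulation. The substantive step is the substitution property: if $u \leq v$ with $u \equiv v \pmod{\Theta}$ and $c \in L$, one must show $u \vee c \equiv v \vee c \pmod{\Theta}$, and dually for $\wedge$. To handle the join case I would fix a maximal chain $u = u_0 \prec u_1 \prec \cdots \prec u_n = v$, so each $[u_i,u_{i+1}]$ is a prime interval with $\fp \cproj [u_i,u_{i+1}]$; congruence-perspectivity up to $[u_i\vee c,\,u_{i+1}\vee c]$, followed by refining this (possibly non-prime) interval by a maximal chain whose prime intervals are congruence-perspective down from the larger one, shows by transitivity of $\cproj$ that every prime interval of $[u\vee c,\,v\vee c]$ is $\cproj$-reached from $\fp$. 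The meet case is dual, and everything else is bookkeeping.
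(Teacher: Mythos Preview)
The paper does not actually prove Lemma~\ref{L:cproj}; it is quoted as a 1955 result of Jakub\'ik (and as Lemma~238 in LTF) and then used as input for the Prime-projectivity Lemma. So there is no ``paper's own proof'' to compare against; your proposal is a self-contained argument for a result the author treats as background.

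Your overall strategy---the forward direction by substitutivity, the converse by exhibiting an explicit congruence $\Theta$ that collapses $\fp$ and then reading off $\fp \cproj \fq$---is the classical one. However, there is a genuine gap in your execution. The claim that ``for $u \leq v \leq w$ the prime intervals of $[u,w]$ are the disjoint union of those of $[u,v]$ and of $[v,w]$'' is false: in $\mathsf{N}_5 = \{0,a,b,c,1\}$ with $0 \prec a \prec b \prec 1$ and $0 \prec c \prec 1$, take $u=0$, $v=c$, $w=1$; the prime interval $[a,b]$ lies in $[u,w]$ but in neither $[u,v]$ nor $[v,w]$. The same defect infects your substitution argument: after refining each $[u_i \vee c,\, u_{i+1} \vee c]$ by a maximal chain you have shown only that the primes \emph{along one particular maximal chain} of $[u\vee c,\, v\vee c]$ are reachable, not that \emph{every} prime subinterval is.

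The standard repair is to change the definition of $\Theta$: declare $u \equiv v \pmod{\Theta}$ if{}f there \emph{exists} a chain $u \wedge v = z_0 \leq z_1 \leq \dots \leq z_n = u \vee v$ with $\fp \cproj [z_{i-1},z_i]$ for each~$i$ (this is Dilworth's description of $\con{\fp}$). With this formulation your arguments go through verbatim: transitivity is concatenation of chains, and substitution follows because $\fp \cproj [z_{i-1},z_i] \cperspup [z_{i-1}\vee c,\, z_i \vee c]$ directly yields a witnessing chain in $[u\vee c,\, v\vee c]$. Since $\fq$ is prime, a witnessing chain for $0_\fq \equiv 1_\fq$ has a single nondegenerate step, giving $\fp \cproj \fq$ as desired.
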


Jakub\'ik's condition can be visualized 
using Figure~\ref{F:Prime-projectivity}; 
we may have to go through intervals of arbitrary size
to get from $\fp$ to $\fq$.

In this note, I introduce the concept of prime-perspectivity.
Let $L$ be a finite lattice 
and let $\fp$ and $\fq$ be prime intervals of $L$. 
The first diagram in Figure~\ref{F:intro}
depicts \emph{$\fp$ down-perspective to $\fq$}, 
in formula, $\fp \perspdn \fq$. 
We define \emph{$\fp$ up-perspective to $\fq$}, 
in formula, $\fp \perspup \fq$, dually. 
In the second diagram in Figure~\ref{F:intro},
$\fq$ is collapsed by $\con{\fp}$,
but we cannot get from $\fp$ to $\fq$ 
by down- and up-perspectivities. 

We introduce a more general step: 
$\fp$~is \emph{prime-perspective down} to $\fq$ if
$\fp$ is down-perspective to~$[0_\fp \mm 1_\fq, 1_\fq]$
and $\fq$ is contained in $[0_\fp \mm 1_\fq, 1_\fq]$.
In other words, we have an $\SN 5$ sublattice:
$\set{0_\fp \mm 1_\fq, 0_\fp, 0_\fq, 1_\fq, 0_\fp \jj 0_\fq}$,
so $\fp$ is down-perspective to~$[0_\fp \mm 1_\fq, 1_\fq]$
and $\fq$ is contained in $[0_\fp \mm 1_\fq, 1_\fq]$.

\begin{figure}[thb]
\centerline{\includegraphics{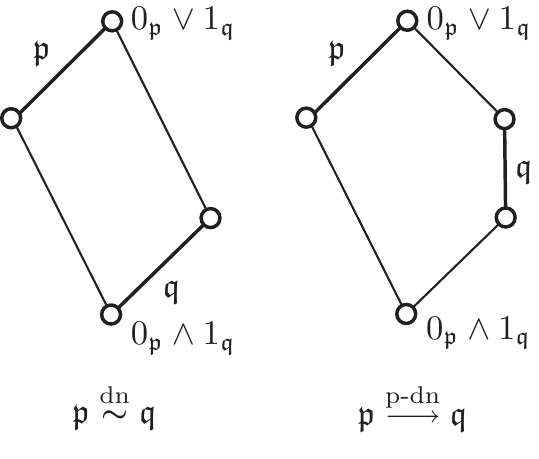}}
\caption{Introducing prime-perspectivity}\label{F:intro}
\end{figure}

We formalize this as follows: 
the binary relation $\fp$ \emph{prime-perspective down} to $\fq$, 
in formula, $\fp \pperspdn \fq$, 
is defined as $1_\fq \leq 1_\fp$,
$\fp \perspdn [0_\fq, 1_\fp \jj 0_\fq]$,
and $0_\fp \mm 1_\fq \leq 0_\fq$. 

So if $\fp \pperspdn \fq$, then $\fp$ and $\fq$ generate
an $\SN 5$, as in the second diagram of Figure~\ref{F:intro},
or a $\SC 2^2$, as in the first diagram of Figure~\ref{F:intro},
or $\SC 2$, if $\fp = \fq$.

We define \emph{prime-perspective up}, $\fp \pperspup \fq$, dually.
Let \emph{prime-perspective}, $\fp \ppersp \fq$, 
mean that $\fp \pperspup \fq$ or $\fp \pperspdn \fq$ and let
\emph{prime-projective}, $\fp \pproj \fq$, 
be the transitive extension of $\ppersp$.

\begin{note} 
A prime-perspectivity $\fp \pperspdn \fq$ that is not a
perspectivity is \emph{established}
by an $\SN 5$ sublattice.
\end{note}

\begin{figure}[thb]
\centerline{\includegraphics{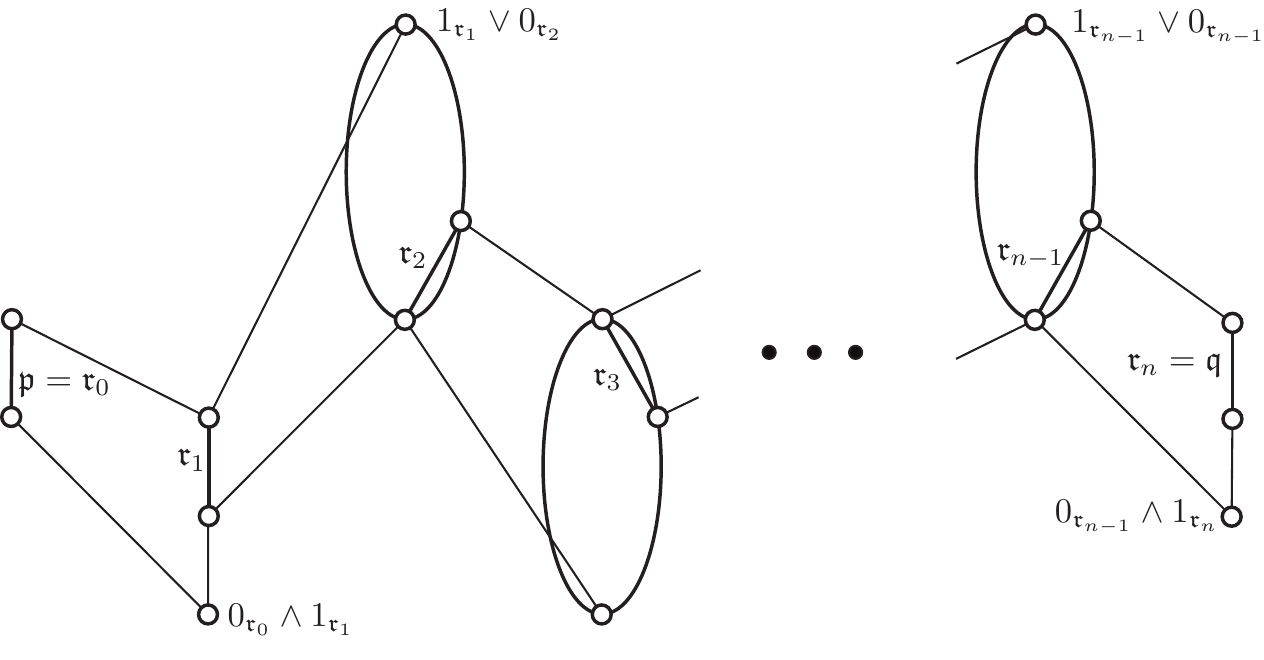}}
\caption{Prime-projectivity: $\fp \pproj \fq$}
\label{F:Prime-projectivity}
\end{figure}

\begin{note} 
The relation $\fp \ppersp \fq$ holds if{}f 
the elements $\set{0_\fp, 0_\fq, 1_\fp, 1_\fq}$ 
satisfy a simple (universal) condition: 
$\fp$ and $\fq$ are perspective or they generate
an $\SN 5$ sublattice as in Figure~\ref{F:intro}.
\end{note}

\begin{note}\label{N:bold}
In the figures, bold lines designate coverings.
\end{note}

Now we state our result: we only have to go through prime intervals 
to spread a congruence from a prime interval to another.
 
\begin{lemma}[Prime-Projectivity Lemma]\label{L:prime}
Let $L$ be a finite lattice and 
let $\fp$ and $\fq$ be distinct prime intervals in $L$.  
Then $\fq$ is collapsed by $\con{\fp}$ 
if{}f $\fp \pproj \fq$, that is, 
if{}f there exists a sequence of pairwise distinct prime intervals
$\fp = \fr_0, \fr_1, \dots, \fr_n = \fq$ satisfying
\begin{equation}\label{E:ppthm}
\fp = \fr_0 \ppersp \fr_1 \ppersp \dotsm \ppersp \fr_n = \fq.
\end{equation}
\end{lemma}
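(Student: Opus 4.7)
\emph{Plan.} I would split the biconditional into its two directions. For the \emph{easy direction} (if $\fp \pproj \fq$ then $\fq$ is collapsed by $\con{\fp}$), induct on the length of the $\ppersp$-sequence: a single step $\fr \pperspdn \fr'$ yields $\con{\fr} \geq \con{\fr'}$, because the perspectivity $\fr \perspdn [0_\fr \mm 1_{\fr'}, 1_{\fr'}]$ forces $\con{\fr}$ to collapse this interval, and $\fr'$ sits inside it by definition of~$\pperspdn$. For the \emph{hard direction}, I would invoke Jakub\'ik's Lemma~\ref{L:cproj} to obtain a sequence of congruence-perspectivities $\fp = I_0 \cpersp I_1 \cpersp \dots \cpersp I_n = \fq$ through possibly large intervals, and then refine it to go through prime intervals only.

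\emph{Key one-step lemma.} The crux is that when the source of a congruence-perspectivity is already prime, the transfer is realized by a single prime-perspectivity: if $\fp = [a,b]$ is prime and $\fp \cperspdn [c,d]$ (so $d \leq b$ and $c = a \mm d$), then $\fp \pperspdn \fr$ for every prime $\fr = [u,v] \subseteq [c,d]$. Indeed, primality of $\fp$ forces $a \jj v \in \set{a, b}$; the possibility $a \jj v = a$ would give $v \leq a \mm d = c \leq u$, collapsing $\fr$, so $a \jj v = b$. This yields the parallelogram $\fp \perspdn [a \mm v, v]$, while $a \mm v \leq c \leq u$ supplies the last condition $0_\fp \mm 1_\fr \leq 0_\fr$ in the definition of $\pperspdn$.

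\emph{Iteration and main obstacle.} The real obstacle is that the intermediate intervals $I_1, \dots, I_{n-1}$ are not prime, so the key lemma does not apply directly to the step $I_{j-1} \cpersp I_j$ when $j \geq 2$. My plan is to choose prime witnesses $\fr_j \subseteq I_j$ adaptively, exploiting that $\Con L$ is distributive and $\con{\fq}$ is join-prime: since $\con{I_j}$ is the join of $\con{\fr}$ over all prime $\fr \subseteq I_j$ and $\con{I_j} \geq \con{\fq}$, join-primality yields a prime $\fr_j \subseteq I_j$ with $\con{\fr_j} \geq \con{\fq}$. Applying the key lemma to the natural restriction of the step $I_{j-1} \cpersp I_j$ to~$\fr_{j-1}$---meeting or joining $\fr_{j-1}$ with the endpoints of the target, as dictated by the direction of the step---then gives $\fr_{j-1} \ppersp \fr_j$, and concatenation (after discarding repeated entries to enforce pairwise distinctness) produces the required $\fp = \fr_0 \ppersp \fr_1 \ppersp \dots \ppersp \fr_n = \fq$. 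The most delicate point, which will require care, is verifying that at each stage the restricted step leaves a nontrivial target from which a valid prime $\fr_j$ can be chosen; it is the adaptive choice of $\fr_j$ via join-primality of $\con{\fq}$ that rescues this.
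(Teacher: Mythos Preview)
Your easy direction and your one-step lemma are both fine; in fact, your one-step lemma is exactly the base case of the paper's induction (dualized). The gap is in the iteration.

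Here is the difficulty. Suppose $I_{j-1} \cperspdn I_j$ and you have already chosen a prime $\fr_{j-1}=[a,b]\ci I_{j-1}$. The only interval to which your key lemma applies is the \emph{restricted target} $[a\mm 1_{I_j},\, b\mm 1_{I_j}]$, and your lemma then gives $\fr_{j-1}\pperspdn \fr$ for every prime $\fr$ inside that interval. But the prime $\fr_j\ci I_j$ that join-primality of $\con{\fq}$ hands you is selected from \emph{all} of $I_j$; there is no reason it lies in $[a\mm 1_{I_j},\, b\mm 1_{I_j}]$, since $b$ may be far below $1_{I_{j-1}}$ while $1_{\fr_j}$ may be anywhere up to $1_{I_j}$. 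Conversely, if you try to apply join-primality \emph{inside} the restricted target, you would need $\con{[a\mm 1_{I_j},\, b\mm 1_{I_j}]}\geq \con{\fq}$, and nothing guarantees that: the congruence-perspectivity only gives $\con{\fr_{j-1}}\geq \con{[a\mm 1_{I_j},\, b\mm 1_{I_j}]}$, the wrong inequality. So from the second step on, the forward scheme breaks down; join-primality does not rescue it.

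The paper avoids this by working \emph{backward} from $\fq$ and proving the following auxiliary statement by induction on $\length(I)$: if $I\cpersp J$ and $\fb\ci J$ is prime, then there is a prime $\fa\ci I$ with $\fa\pproj\fb$ (note: $\pproj$, not $\ppersp$). Your key lemma is precisely the base case $\length(I)=1$. In the inductive step (say $I\cperspup J$, after normalizing so that $0_\fb=0_J$ and $0_I=1_I\mm 0_\fb$), one picks $u$ with $0_I\prec u<1_I$; if $1_\fb\leq u\jj 0_\fb$ then $\fa=[0_I,u]$ already satisfies $\fa\pperspup\fb$, and otherwise one replaces $\fb$ by a prime $\fb_1$ with $1_{\fb_1}=u\jj 1_\fb$, observes $\fb_1\perspdn\fb$, and recurses on the strictly shorter $[u,1_I]$. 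The cost is that a single $\cpersp$ step may unfold into several $\ppersp$ steps, which is exactly what your forward, one-for-one scheme was trying to avoid.
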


Let us call a lattice $L$ an \emph{SPS lattice}, 
if it is slim, planar, semimodular; see
G.~Gr\"atzer and E.~Knapp~\cite{GKn08a}.
We state a strong form of the
Prime-projectivity Lemma for SPS lattices,
using the concept of a swing.

For the prime intervals $\fp, \fq$ of an SPS lattice $L$, 
we define a binary relation:
$\fp$~\emph{swings} to $\fq$, in formula, $\fp \swing \fq$,
if $1_\fp = 1_\fq$, 
this element covers at least three elements,
and $0_\fq$ is not the left-most or right-most element
covered by $1_\fp = 1_\fq$. 
We call the element $1_\fp = 1_\fq$
the \emph{hinge} of the swing.

See Figure~\ref{F:n7+} for two examples; in the first, the hinge covers three elements, in~the second, five elements. 

\begin{figure}[hbt]
\centerline{\includegraphics{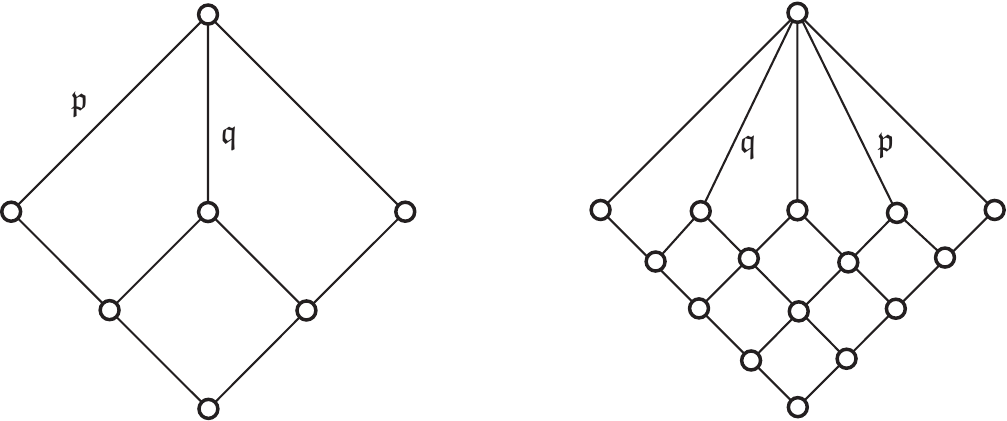}} 
\caption{Swings, $\fp \protect\swing \fq$}\label{F:n7+}
\end{figure}

\begin{lemma}[Swing Lemma]\label{L:SPSproj}
Let $L$ be an SPS lattice 
and let $\fp$ and $\fq$ be distinct prime intervals in $L$. 
Then $\fq$ is collapsed by $\con{\fp}$ if{}f 
there exists a prime interval~$\fr$ 
and sequence of pairwise distinct prime intervals
\begin{equation}\label{Eq:sequence}
\fr = \fr_0, \fr_1, \dots, \fr_n = \fq
\end{equation}
such that $\fp$ is up perspective to $\fr$, and 
$\fr_i$ is down perspective to or swings to $\fr_{i+1}$
for $i = 0, \dots, n-1$. 
In addition, the sequence \eqref{Eq:sequence} also satisfies 
\begin{equation}\label{E:geq}
   1_{\fr_0} \geq 1_{\fr_1} \geq \dots \geq 1_{\fr_n}.
\end{equation}
\end{lemma}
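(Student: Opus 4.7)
The plan is to prove both directions of the Swing Lemma separately, relying on Lemma~\ref{L:prime} (Prime-projectivity Lemma) and the structural theory of SPS lattices.

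For the \emph{if} direction, I verify that every permitted step satisfies $\con{\fr_i} \geq \con{\fr_{i+1}}$. Perspectivities are classical. The swing case reduces to the local fork structure at a multi-cover in an SPS lattice: if $a = 1_{\fr_i} = 1_{\fr_{i+1}}$ covers $b_1, \dots, b_k$ (in left-to-right planar order, $k \geq 3$), then by semimodularity and slimness, $[b_{j-1} \mm b_j, b_{j-1}] \perspup [b_j, a]$ for $2 \leq j \leq k$, and iterating these perspectivities inside the fork shows that any interior $[b_j, a]$ with $1 < j < k$ collapses under the congruence generated by an extremal $[b_1, a]$ or $[b_k, a]$. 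Combined with $\con{\fp} = \con{\fr}$ from the initial up-perspectivity, this yields $\con{\fp} \geq \con{\fq}$, as required.

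For the \emph{only if} direction, apply Lemma~\ref{L:prime} to obtain a sequence of pairwise distinct prime-perspectivities from $\fp$ to $\fq$, and refine it in two stages. \emph{First}, replace each step that is not already a perspectivity by an equivalent combination of a perspectivity with a swing. By the Note following Figure~\ref{F:intro}, such a step is established by an $\SN 5$ sublattice; the presence of this $\SN 5$ in an SPS lattice forces, by slimness and semimodularity, a multi-cover at one of the two endpoint tops, exhibiting a hinge, and the prime-perspectivity factors as a perspectivity to that hinge followed by a swing. \emph{Second}, normalize the resulting sequence so that a single up-perspectivity $\fp \perspup \fr$ appears at the very start and no up-steps occur thereafter. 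Choose $\fr$ so that $1_\fr$ is maximal among the tops that appear along some valid path from $\fp$ to $\fq$; the remaining path from $\fr$ to $\fq$ then consists only of down-perspectivities and swings, so \eqref{E:geq} is automatic because swings preserve the top while down-perspectivities strictly lower it.

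The main obstacle is the normalization step. An arbitrary prime-projectivity sequence may zigzag up and down, and the planar rigidity of $L$ (two-dimensional diagram, no $\mathsf{M}_3$ sublattice) must be invoked to show that any internal up-step can be commuted forward past preceding down-perspectivities and swings until it either merges with the initial up-perspectivity from $\fp$, or produces a repeated prime interval that can be excised by the pairwise-distinctness requirement in~\eqref{E:ppthm}. This commutation argument, together with the case analysis on how $\SN 5$-established steps interact with the fork structure of SPS lattices, is where the bulk of the work will lie.
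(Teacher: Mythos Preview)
Your route is genuinely different from the paper's. The paper does \emph{not} derive the Swing Lemma from the Prime-projectivity Lemma at all. Instead, it proceeds in two steps: first, it quotes Cz\'edli's proof (via the Trajectory Coloring Theorem for slim rectangular lattices) that the Swing Lemma holds in every \emph{rectangular} lattice; second, it proves Lemma~\ref{L:omit}, which says that removing a corner from an SPS lattice preserves the validity of the Swing Lemma. Since every SPS lattice arises from a rectangular lattice by repeatedly removing corners, these two ingredients finish the proof. A second, direct proof is only cited (reference~\cite{gG14e}), not given here.

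Your plan, by contrast, starts from the Prime-projectivity Lemma and attempts to massage the resulting sequence into the required ``one climb, then slide/swing'' form. That is a reasonable strategy in spirit, but as written it has two substantive gaps that are not merely bookkeeping.

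First, the claim that an $\SN 5$-established prime-perspectivity in an SPS lattice factors as a perspectivity followed by a swing is not justified. You assert that the $\SN 5$ ``forces, by slimness and semimodularity, a multi-cover at one of the two endpoint tops''; but nothing in the definition of prime-perspectivity guarantees that the relevant top covers three or more elements, and producing the hinge requires a nontrivial structural argument about how such an $\SN 5$ must sit inside a slim planar semimodular lattice. This step carries real content and cannot be waved through.

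Second, the normalization (commuting all up-steps to the front) is acknowledged as ``the main obstacle,'' but you give only a heuristic: choose $\fr$ with maximal top and hope the rest goes down. It is not clear that a maximal-top choice exists with the remaining path consisting only of down-perspectivities and swings, nor that internal up-steps can always be commuted past swings. Each swing fixes the top but may change which ``side'' of the fork you are on, and an up-perspectivity following a swing need not lift to one preceding it. Without a concrete commutation lemma (or an induction on a well-chosen parameter), this part is a plan, not a proof.

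In short: the paper's argument is reduction to the rectangular case plus a short corner-removal lemma; your proposal is a direct refinement of prime-projectivity sequences, which may be completable along the lines of~\cite{gG14e}, but the two hard steps you identify are genuinely hard and are not carried out here.
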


\begin{figure}[thb]
\centerline{\includegraphics{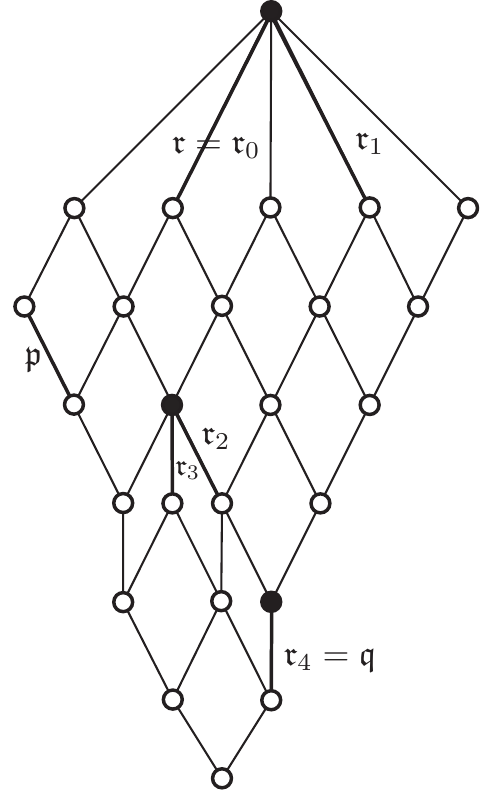}}
\caption{Illustrating the Swing Lemma}\label{F:traj}
\end{figure}

The Swing Lemma is easy to visualize. 
Perspectivity up is ``climbing'', 
perspectivity down is ``sliding''. 
So we get from $\fp$ to $\fq$ by climbing once
and then alternating sliding and swinging.
In Figure~\ref{F:traj}, 
we climb up from $\fp$ to $\fr = \fr_0$, 
swing from $\fr_0$ to $\fr_1$,
slide down from $\fr_1$ to $\fr_2$,
swing from $\fr_2$ to $\fr_3$,
and finally slide down from $\fr_3$ to $\fr_4$.
 
Section~\ref{S:Preliminaries} recalls some basic concepts and notation.
In~Section~\ref{S:Prime}, I prove the Prime-projectivity Lemma. 
In~Section~\ref{S:application},
the Prime-projectivity Lemma is applied to verify 
a result of~mine with E. Knapp \cite{GKn08a}.
Section~\ref{S:Swing} deals with the Swing Lemma.
 
\section{Preliminaries}\label{S:Preliminaries}
We use the concepts and notation of LTF.
For an ideal $I$, we use the notation $I = [0_I, 1_I]$.

We recall that $[a,b] \persp [c,d]$ denotes \emph{perspectivity}, 
$[a,b] \perspup [c,d]$ and 
$[a,b] \perspdn [c,d]$ perspectivity up and down, 
see Figure~\ref{F:cong1}; 
$[a,b] \proj [c,d]$ denotes \emph{projectivity}, the transitive closure of perspectivity.

$[a,b] \cpersp [c,d]$ denotes \emph{congruence-perspectivity},
$[a,b] \cperspup [c,d]$ and $[a,b] \cperspdn [c,d]$ 
denote congruence-perspectivity up and down, see Figure~\ref{F:cong2}; 
$[a, b] \cproj [c, d]$ denotes \emph{congruence-projectivity}, 
the transitive closure of congruence-perspectivity.

\begin{figure}[tbh]
  \centerline{\includegraphics{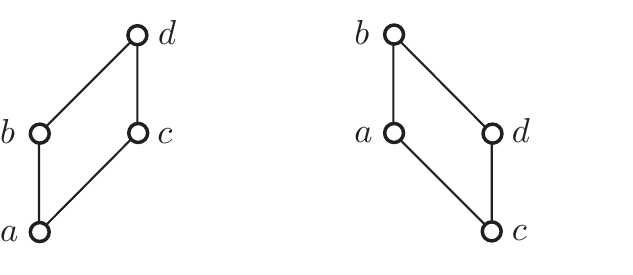}}
  \caption{$[a,b] \persp [c, d]$: $[a,b] \perspup [c, d]$ and $[a,b] \perspdn [c, d]$}\label{F:cong1}

\bigskip

\bigskip

\centerline{\includegraphics{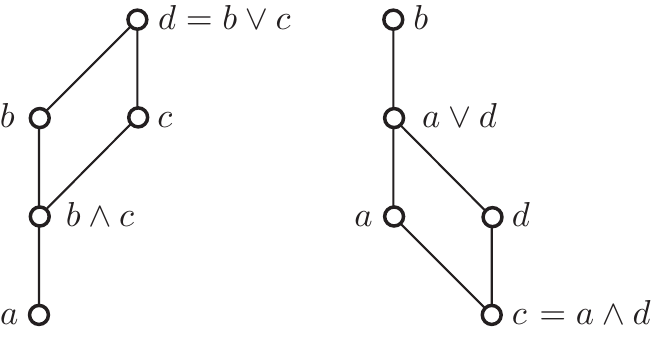}}
\caption{$[a,b] \cperspup [c, d]$ and $[a,b] \cperspdn [c, d]$}\label{F:cong2}
\end{figure}

\section{Proving the Prime-projectivity Lemma}\label{S:Prime}

To prove the Prime-projectivity Lemma, 
let $\fp$ and $\fq$ be prime intervals in a finite lattice $L$ 
with $\con{\fp} \geq \con{\fq}$.
By Lemma~\ref{L:cproj}, there is a sequence of congruence-perspectivities 
\begin{equation}\label{E:pr-perp}
\fp = I_0 \cpersp I_1 \cpersp \dotsm \cpersp I_m = \fq.
\end{equation}

To get from \eqref{E:pr-perp} to \eqref{E:ppthm}, by induction on $m$, 
it is sufficient to prove the following statement.

\begin{lemma}\label{L:cover0}
Let $L$ be a finite lattice and let $I \cpersp J$ be intervals of $L$. 
Let $\fb$ be a prime interval in $J$. 
Then there exists a prime interval $\fa \ci I$ 
satisfying $\fa \pproj \fb$.
\end{lemma}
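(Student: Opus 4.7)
The plan is to reduce to the case $I \cperspdn J$, the case $I \cperspup J$ being dual, and to write $I = [a,b]$ and $J = [c,d]$ with $d \leq b$ and $c = a \mm d$, and let $\fb = [x, y]$. If $a \leq d$, then $c = a$ and $J \ci I$, so I may simply take $\fa = \fb$. Otherwise $a \not\leq d$, and since $y \leq d$ this forces $a \not\leq y$, hence $a < a \jj x \leq a \jj y \leq b$.

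Case~B ($a \jj x < a \jj y$): pick the coatom $w$ of any maximal chain from $a \jj x$ to $a \jj y$, so $a \jj x \leq w \prec a \jj y$, and set $\fa := [w, a \jj y]$, a prime interval in $I$. Since $w \geq a$, one has $y \not\leq w$ (else $a \jj y \leq w$); the cover $a \jj y \succ w$ then gives $w \jj y = a \jj y$, while, using $x \leq w$, the meet $w \mm y$ lies in $[x, y]$ with $w \mm y \neq y$, so primality of $\fb$ yields $w \mm y = x$. Hence $\set{x, y, w, a \jj y}$ is a $\SC 2^2$, witnessing $\fa \perspdn \fb$ and therefore $\fa \pproj \fb$.

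Case~A ($a \jj x = a \jj y =: u$): first note that $a \mm y \leq a \mm d = c \leq x$ forces $a \mm y = a \mm x$, so $\set{a \mm x, a, x, y, u}$ is an $\SN 5$ sublattice of $L$. Choose any $z \prec u$ with $a \leq z$ (the coatom of a maximal chain from $a$ to $u$) and set $\fa := [z, u]$, prime and contained in $I$. Then $y \not\leq z$ (else $u = a \jj y \leq z$), so $z \jj y = u$; meanwhile $z \mm y$ lies in $[a \mm x, y]$ and is not $y$, so primality of $\fb$ excludes $z \mm y \in (x, y)$, forcing $z \mm y \leq x$. Thus $\set{z \mm y, z, x, y, u}$ is an $\SN 5$ (degenerating to a $\SC 2^2$ when $z \mm y = x$), which establishes $\fa \pperspdn \fb$, and hence $\fa \pproj \fb$.

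The main obstacle is Case~A: the naive lift $[a \jj x, a \jj y]$ collapses to the single element $u$, so no prime interval in $I$ can be directly perspective to $\fb$. This is precisely the situation for which prime-perspectivity was introduced; the key technical point is that, although the abstract $\SN 5$ spanned by $\set{a, x, y, u}$ may be traversed in $L$ by further elements of $[a, u]$, primality of $\fb$ still forces the meet $z \mm y$ of any $a$-side coatom $z$ below $u$ to land in $[a \mm x, x]$, producing the $\SN 5$ or $\SC 2^2$ needed to witness $\fa \pperspdn \fb$.
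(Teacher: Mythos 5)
Your Case~B is sound: there $x \le a \jj x \le w$, so $w \mm y \ge x$, and primality of $\fb$ really does pin $w \mm y$ down to $x$, giving a genuine perspectivity. The gap is in Case~A, at the step ``$z \mm y$ lies in $[a \mm x, y]$ and is not $y$, so primality of $\fb$ excludes $z \mm y \in (x,y)$, forcing $z \mm y \le x$.'' Primality of $[x,y]$ only excludes elements strictly \emph{between} $x$ and $y$; it says nothing about elements of $[a \mm x, y)$ that are \emph{incomparable} with $x$. And in Case~A you necessarily have $x \not\le z$ (else $u = a \jj x \le z$), so there is no reason for $z \mm y$ to lie above $x$. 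Concretely, let $L = \set{0, a, w, x, z, y, u}$ with atoms $a, w, x$, with $z = a \jj w$, $y = w \jj x$, $u = 1_L = a \jj x = z \jj y$, and all other meets equal to $0$ except $z \mm y = w$. Take $I = [a, u] \cperspdn J = [0, y]$ (indeed $a \mm y = 0$ and $y \le u$) and $\fb = [x,y] \ci J$. Then $a \jj x = a \jj y = u$, so you are in Case~A; the only coatom of $u$ above $a$ is $z$, and $z \mm y = w \not\le x$. Hence $[z,u] \not\pperspdn [x,y]$, and the asserted $\SN 5$ does not exist. In fact no prime interval of $I$ (there are only $[a,z]$ and $[z,u]$) is prime-perspective to $[x,y]$ in a single step; the lemma still holds, but only via a chain, e.g., $\fa = [a,z] \perspdn [0,w] \perspup [x,y]$. (A smaller slip: from $a \not\le y$ you infer $a < a \jj x$, which fails when $x = c \le a$; what you actually need, and what is true, is $y \not\le a$, hence $a < a \jj y$.)

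The missing idea is recursion. The paper's proof is an induction on $\length(I)$: after normalizing $I$ and $J$ it tests one covering step, and when the single prime-perspectivity is not available it does \emph{not} force one; instead it replaces $\fb$ by a new prime interval $\fb_1$ with $\fb_1 \persp \fb$ (in your dual setting, $\fb_1 \ci [z \mm x,\, z \mm y]$ with $\fb_1 \perspup \fb$ --- in the example above, $\fb_1 = [0,w]$) and recurses on the strictly shorter interval $[a, z]$. The output is therefore a chain of prime-perspectivities whose length can grow with $\length(I)$, and the example shows this is unavoidable: $\pproj$ cannot in general be realized by one application of $\ppersp$ out of $I$. Your Case~A would be repaired by adding exactly this inductive fallback when $z \mm y \not\le 0_\fb$.
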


\begin{proof}
By duality, we can assume that $I \cperspup J$.
We prove the statement by induction on $\length(I)$, 
the length of $I$ (that is, the length of the longest chain in $I$).
Note that if $I'$ is an interval properly contained in $I$, then
$\length(I') <\length (I)$.

For the induction base, let $I$ be prime. Then take $\fa=I$. 

For the induction step, we can assume that $I$ is not prime 
and that the statement is proved for intervals shorter than $I$.

Without loss of generality, we can assume that 
\begin{enumeratei}
\item $J \nci I$---otherwise, take $\fa = \fb \ci I$;

\item $0_{\fb} = 0_J$---otherwise, 
replace $J$ with $[0_{\fb}, 1_J]$;

\item $0_I=1_I\mm 0_{\fb}$---otherwise, replace $I$ with $[1_I \mm 0_J, 1_I] \ci I$.
\end{enumeratei}

Since $I$ is not a prime interval, there is an element $u \in I$ 
satisfying  $0_I \prec u < 1_I$. If $1_{\fb}\leq u \jj 0_{\fb}$, 
then take $\fa=[0_I,u]$; clearly, $\fa \pperspup \fb$.

\begin{figure}[hbt] 
\centerline{\begin{tikzpicture}
\draw (2,0)--(0,3)--(4,5)--(6,2)--(2,0);
\draw (1.325,1)--(4.325,2.5);
\draw (6,2)--(4.325,2.5)--(4.66,4);
\draw[line width = 1.5pt](2,0)--(1.33,1);
\draw[line width = 1.5pt](4.45,3)--(4.65,4);
\draw[line width = 1.5pt](6,2)--(5.285,3);
\mycircle{2,0};
\node at (2-.4,0) {$0_I$}; 
\mycircle{6,2};
\node at (6+.8,2) {$0_J = 0_\fb$};
\mycircle{0,3};
\node at (0-.4,3) {$1_I$};
\mycircle{4,5};
\node at (4+.4,5) {$1_J$};
\mycircle{1.325,1};
\node at (1.325-.4,1) {$u$};
\mycircle{5.325,3};
\node at (5.325+.4,3) {$1_\fb$};
\mycircle{4.325,2.5};
\node at (5.325+.4,3) {$1_\fb$};
\mycircle{4.46,3};
\node at (4.46+.4,3) {$0_{\fb_1}$};
\mycircle{4.67,4};
\node at (4.67+.4,4) {$1_{\fb_1}$};
\node at (6,3.5) {$J$};
\node at (0,1.5) {$I$};
\node at (1.7,2) {$I_1 = [u, 1_I]$};
\node at (3.1,3.5) {$[u \jj 0_\fb, 1_J] = J_1$};
\node at (5.86,2.6) {$\fb$};
\node at (4.8,3.4) {$\fb_1$};
\node at (3.7,2.66) {$u \jj 0_\fb$};
\end{tikzpicture}}
\caption{Proving the Prime-projectivity Lemma}
\label{F:Proving}
\end{figure}

Therefore, we can additionally assume that 
\begin{enumeratei}
\item[(iv)] $u \jj 0_{\fb} \parallel 1_{\fb}$,
\end{enumeratei}
see Figure~\ref{F:Proving}. Then 
\[
   u \jj 1_\fb = (u \jj 0_\fb) \jj 1_\fb > u \jj 0_\fb.
\]
So we can define a prime interval~$\fb_1$ 
with 
\[
   u \jj 0_{\fb} \leq 0_{\fb_1}\! \prec 1_{\fb_1} = u \jj 1_{\fb}.
\] 
Observe that 
\begin{equation}\label{E:onestep}
\fb_1 \perspdn \fb. 
\end{equation}
Applying~the induction hypotheses to 
$I_1 = [u ,1_I]$, $J_1 = [0_{\fb_1}, 1_J]$, and $\fb_1$, 
we obtain a prime interval $\fa \ci I_1 \ci I$ 
satisfying $\fa \pproj \fb_1$. 
Combining $\fa \pproj \fb_1$ and \eqref{E:onestep}, 
we obtain that $\fa \pproj \fb$,
as required.
\end{proof}

The proof actually verified more than stated. 
Every step of the induction adds no more than one prime interval 
to the sequence.
 
\section{An application}\label{S:application}
In \cite{GKn08a}, I proved with  E. Knapp the following result:

\begin{theorem}\label{T:n5}
Let $L$ be a finite lattice and 
let $\fp$ and $\fq$ be prime intervals in $L$ such that  
$\con{\fp} \succ  \con{\fq}$ in the order $\Conj{L}$ of join-irreducible congruences of $L$.
Then there exist a sublattice $\SN 5$ of $L$---depicted by 
Figure~\ref{F:Prime-perspectivity}---and prime intervals $\fp_0$ and $\fq_0$ in~$\SN 5$ satisfying
$\con{\fp} = \con{\fp_0}$ and 
$\con{\fq} = \con{\fq_0}$.
\end{theorem}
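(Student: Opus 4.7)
The plan is to feed the hypothesis $\con{\fp} \succ \con{\fq}$ into the Prime-projectivity Lemma and then use the covering relation in $\Conj L$ to localize the required $\SN 5$ as the witness of a single, critical prime-perspective step. Since in particular $\con{\fp} > \con{\fq}$, the prime interval $\fq$ is collapsed by $\con{\fp}$, so Lemma~\ref{L:prime} supplies a sequence of pairwise distinct prime intervals
\[
\fp = \fr_0 \ppersp \fr_1 \ppersp \dotsm \ppersp \fr_n = \fq.
\]

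Applying Lemma~\ref{L:prime} to each consecutive pair, every single step forces $\con{\fr_i} \geq \con{\fr_{i+1}}$, so the join-irreducible congruences $\con{\fr_0}, \dots, \con{\fr_n}$ weakly decrease from $\con{\fp}$ to $\con{\fq}$ in $\Conj L$. The covering hypothesis $\con{\fp} \succ \con{\fq}$ then forces each $\con{\fr_i}$ to equal $\con{\fp}$ or $\con{\fq}$. Letting $k$ be the largest index with $\con{\fr_k} = \con{\fp}$, I obtain $\con{\fr_{k+1}} = \con{\fq}$, so the single step $\fr_k \ppersp \fr_{k+1}$ strictly drops the associated congruence.

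Because perspective prime intervals generate the same congruence, this step cannot be an ordinary perspectivity; by the Note following the definition of prime-perspectivity, it is therefore established by an $\SN 5$ sublattice of $L$ containing both $\fr_k$ and $\fr_{k+1}$ as prime-interval edges. Setting $\fp_0 = \fr_k$ and $\fq_0 = \fr_{k+1}$ then delivers the theorem: $\con{\fp_0} = \con{\fp}$, $\con{\fq_0} = \con{\fq}$, and $\fp_0, \fq_0$ sit in an $\SN 5$ matching Figure~\ref{F:Prime-perspectivity}. The only point requiring care, which I expect to be essentially routine, is verifying that the five elements singled out by the definition of $\pperspdn$ (or dually $\pperspup$) really do form an $\SN 5$ sublattice precisely when the step is non-perspective; this is a direct inspection of the defining inequalities $1_{\fr_{k+1}} \leq 1_{\fr_k}$, $\fr_k \perspdn [0_{\fr_{k+1}}, 1_{\fr_k} \jj 0_{\fr_{k+1}}]$, and $0_{\fr_k} \mm 1_{\fr_{k+1}} \leq 0_{\fr_{k+1}}$.
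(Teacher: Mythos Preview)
Your argument is correct and follows essentially the same route as the paper: apply the Prime-projectivity Lemma, observe that the associated congruences weakly decrease, and use the covering $\con{\fp} \succ \con{\fq}$ to locate a single step $\fr_k \ppersp \fr_{k+1}$ where the drop occurs, yielding the desired $\SN 5$. Your write-up is in fact a bit more explicit than the paper's, since you spell out why this step cannot be a perspectivity and hence must be witnessed by an $\SN 5$.
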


\begin{proof}
Let $\fp$ and $\fq$ be prime intervals in~$L$ 
such that  $\con{\fp} \succ \con{\fq}$ in the order
$\Conj$ of join-irreducible congruences of $L$. 
By~the Prime-projectivity Lemma,
there exists prime intervals $\fr_i$, 
for $0 \leq i \leq m$, such that 
\eqref{E:ppthm} holds. Clearly, 
\[
   \con{\fp}= \con{\fr_0} \geq \con{\fr_1} \geq \cdots \geq  
       \con{\fr_m} = \con{\fq}.
\]
So there is a (unique) $0 \leq k < m$ such that 
\begin{align*}
      \con{\fr_k} &= \con{\fp},\\
  \con{\fr_{k+1}} &= \con{\fq}.
\end{align*} 
Therefore, $\fr_k=\fp_0$ and $\fr_{k+1}=\fq_0$ 
 satisfy the statement of Theorem~\ref{T:n5}. 
\end{proof}
\section{The Swing Lemma}\label{S:Swing}
\subsection{Some basic concepts}\label{S:basic}
For SPS lattices, we stated the Swing Lemma, 
a~strong form of the Prime-projectivity Lemma. 
To discuss its proof, we need a few more concepts and results.
(For a more detailed overview,
see G. Cz\'edli and G. Gr\"atzer~\cite{CGa}
and G. Gr\"atzer~\cite{gG13b}.)

An element $c$ of an SPS lattice $L$ is a \emph{left corner},
if it is a doubly-irreducible element 
on the left boundary of $L$
(excluding $0_L$ and $1_L$), 
its unique cover $c^*$ covers exactly two elements, 
and dually, for the lower cover $c_*$.
We define a \emph{right corner} symmetrically.

An SPS lattice $L$ is called \emph{rectangular} 
if it has a unique left corner $c_l$ and a~unique right corner
$c_r$, and these two elements are complementary.
See G. Gr\"atzer and E.~Knapp~\cite{GKn07}--\cite{GKn10}
for these concepts and the two observations in the next paragraph.

Every SPS lattice $L$ has a congruence-preserving extension
to a rectangular lattice $R$, which we obtain by adding corners.
By the same token, every SPS lattice~$L$ can be obtained 
from a rectangular lattice $R$ by removing corners.

\subsection{Verifying the Swing Lemma}\label{S:Verifying}
We have two proofs of the Swing Lemma.

G. Cz\'edli applies in \cite{gC14a} 
the Trajectory Coloring Theorem for Slim Rectangular Lattices 
of G. Cz\'edli~\cite[Theorem 7.3]{gC14} 
to prove the Swing Lemma for rectangular lattices.

In view of the discussion at the end of 
Section~\ref{S:basic}, to complete the proof of the Swing Lemma,
it~is sufficient to prove the following result. 

\begin{lemma}\label{L:omit}
Let $L$ be an SPS lattice. 
Let $c$ be a corner of $L$ and let $L' = L - \set{c}$,
a~sublattice of $L$.
Let us assume that the Swing Lemma holds for the lattice $L$. 
Then the Swing Lemma holds also for the lattice $L'$.
\end{lemma}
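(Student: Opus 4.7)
The plan is to bootstrap from the Swing Lemma in $L$ to one in $L'$ by lifting the collapse up to $L$, invoking the assumed Swing Lemma there, and then detouring the resulting sequence around the removed corner $c$.

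First I move the hypothesis up to $L$. Since $L'$ is a sublattice of $L$, every congruence of $L$ restricts to a congruence of $L'$, so the restriction of $\con{\fp}$ (computed in $L$) is a congruence of $L'$ collapsing $\fp$, and hence contains $\con{\fp}$ (computed in $L'$). Thus $\fq$ collapsed in $L'$ implies $\fq$ collapsed in $L$, and the hypothesized Swing Lemma in $L$ furnishes a prime interval $\fr$ and a sequence $\fr = \fr_0, \ldots, \fr_n = \fq$ of pairwise distinct prime intervals of $L$ with $\fp \perspup \fr$, each consecutive pair linked by a down-perspective or a swing, and monotone tops as in \eqref{E:geq}.

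Next I record the local geometry at $c$. Because $c$ is a corner, $c^*$ covers only $c$ and a twin $d$, while $c_*$ is dually covered only by $c$ and the same $d$; so $\set{c_*, c, d, c^*}$ is a covering square. The only prime intervals of $L$ missing from $L'$ are $[c_*, c]$ and $[c, c^*]$, perspective to the twins $[d, c^*]$ and $[c_*, d]$ of $L'$, respectively. Crucially, $c$ has a unique lower cover and $c^*$ has exactly two lower covers, so neither can be the hinge of a swing; the two removed prime intervals may appear in the sequence only as parties to a down-perspective (or to the initial up-perspective from $\fp$).

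I then reroute the sequence. A short case analysis using the join-irreducibility of $c$ shows that $[c_*, c]$ cannot appear in the sequence at all: any down-perspective step to or from $[c_*, c]$ forces the other interval to coincide with $[c_*, c]$ itself, contradicting distinctness. For $[c, c^*]$ the meet-irreducibility at $c$ pins the down-perspective partner to its twin $[c_*, d]$, and the perspectivity $[c, c^*] \persp [c_*, d]$ lets me substitute the twin and delete the duplicate; the initial perspective $\fp \perspup [c, c^*]$ is rerouted analogously to $\fp \perspup [c_*, d]$, which holds in $L'$ because the SPS structure forces $1_\fp \leq d$. Since the twin has top $d < c^*$, the monotone condition \eqref{E:geq} is preserved, and every swing transfers verbatim as its hinge lies outside $\set{c, c^*}$.

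I expect this rerouting step to be the main obstacle. The delicate point is to verify that, after all the substitutions and deletions, consecutive prime intervals remain linked by a single down-perspective or swing in $L'$ and not merely a longer projectivity. The leverage is the local rigidity supplied by the double irreducibility of $c$: any perspectivity touching a removed prime interval must snap onto its covering-square twin, so each replacement produces a genuine single-step link in $L'$.
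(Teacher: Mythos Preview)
Your argument is correct and follows essentially the same strategy as the paper: you isolate the two prime intervals $[c_*,c]$ and $[c,c^*]$ as the only possible obstructions and eliminate them using the double irreducibility of $c$ together with the fact that $c^*$ has exactly two lower covers (so neither $c$ nor $c^*$ can be a hinge, and any perspectivity touching a removed interval is forced onto its covering-square twin). The paper packages the identical observation as a minimality argument---choose a sequence with $n$ and $\length[1_\fp,1_{\fr_0}]$ minimal and derive a contradiction from any appearance of $[c_*,c]$ or $[c,c^*]$---whereas you carry out the equivalent explicit rerouting; the underlying content is the same.
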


\begin{proof}
Let $\fp$ and $\fq$ be prime intervals in~$L'$
such that $\fq$ is collapsed by  $\con{\fp}$ in~$L'$.
Then we also have that $\fq$ is collapsed by $\con{\fp}$ in~$L$.
Since the Swing Lemma is assumed to hold in $L$,
therefore, there exists a sequence 
of pairwise distinct prime intervals
\eqref{Eq:sequence} in $L$. 
We can assume that we choose the sequence in
\eqref{Eq:sequence} so that 
$n$ and $\length [1_p, 1_{r_0}]$ are minimal.
To~show that the Swing Lemma holds in~$L'$, 
we prove that every prime interval 
$\fr_i$ in  the sequence \eqref{Eq:sequence} is in $L'$. 

Let $\set{c, c_*, c^*, d}$ 
be the (unique) covering square in $L$ containing $c$.
The lattice~$L$ has exactly two prime intervals 
not in $L'$, namely,
$\ft_* = [c_*, c]$ and $\ft^* = [c, c^*]$. 
We claim that neither appears in \eqref{Eq:sequence};
the verification of this claim completes the proof of the lemma

Let us assume that $\ft^*$ appears in the sequence \eqref{Eq:sequence} for $\fp$ and $\fq$. 
Since $c^*$ covers exactly two elements 
by the definition of a corner, 
it is not the hinge of a swing, 
so the step to $\ft^*$ must be a perspectivity up 
and the step from $\ft^*$ must be a perspectivity down;
therefore, the sequence can be shortened, contradicting the minimality of $n$.

Let $\ft_*$ appear in the sequence.
Since $c$ is doubly irreducible, 
it is not the hinge of a swing, 
so the step to $\ft_*$ must be a perspectivity down.
For the same reason, the next step, back into $L'$,  
must be a perspectivity up; therefore, 
the sequence can be shortened, 
contradicting again the minimality of $n$.
\end{proof}

The second proof in G. Gr\"atzer~\cite{gG14e}
is more ``elementary''; it proves the Swing Lemma directly
for SPS lattices.


\begin{thebibliography}{99}

\bibitem{gC14}
G. Cz\'edli,
\emph{Patch extensions and trajectory colorings 
of slim rectangular lattices,}
Algebra Universalis \tbf{72} (2014), 125--154.

\bibitem{gC14a}
\bysame,
\emph{A note on congruence lattices of slim semimodular lattices,}
Algebra Universalis. 

\bibitem{gC14b}
\bysame,
\emph{From trajectory colorings to G. Gr\"atzer's Swing Lemma in rectangular lattices.}

\bibitem{CG12}
G. Cz\'edli and G. Gr\"atzer, 
\emph{Notes on planar semimodular lattices. VII.
Resections of planar semimodular lattices.}
Order \tbf{29} (2012), 1--12.

\bibitem{CGa}
G. Cz\'edli and G. Gr\"atzer, 
\emph{Planar Semimodular Lattices: Structure and Diagrams.}
Chapter~4 in \cite{LTE1}. 40 pp. plus bibliography.

\bibitem{CS11}
G. Cz\'edli and E.\,T. Schmidt,
\emph{The Jordan-H\"older theorem with uniqueness 
for groups and semimodular lattices,}
Algebra Universalis \tbf{66} (2011), 69--79.
 
\bibitem{CS12a}
\bysame, 
\emph{Slim semimodular lattices. I. A visual approach,}
Order \tbf{29} (2012), 481--497.

\bibitem{CSb}
\bysame, 
\emph{Slim semimodular lattices. II. A description by patchwork systems,}  
Order \tbf{30} (2013), 689--721. 

\bibitem{CS12}
\bysame, 
\emph{Composition series in groups and the structure of slim semimodular lattices,}
Acta Sci. Math. (Szeged) \tbf{79} (2013), 369--390. 
 
\bibitem{LTF}
G. Gr\"atzer,
Lattice Theory: Foundation. 
Birkh\"auser Verlag, Basel, 2011. xxix+613 pp. ISBN: 978-3-0348-0017-4.

\bibitem{gG13b}
G. Gr\"atzer, 
\emph{Planar Semimodular Lattices: Congruences.} 
Chapter 4 in \cite{LTE1}. 32 pp. plus bibliography.

\bibitem{gG14a}
G. Gr\"atzer, 
\emph{Congruences of fork extensions of lattices.}
Algebra Universalis. 
arXiv: 1307.8404

\bibitem{gG14d}
\bysame, 
\emph{On a result of G\'abor Cz\'edli concerning congruence lattices of planar semimodular lattices}. Acta Sci. Math. (Szeged).

\bibitem{gG14e}
\bysame, 
\emph{Congruences in slim, planar, semimodular lattices}. 

\bibitem{GLS98a}
G. Gr\"atzer, H. Lakser, and E.\,T. Schmidt, 
\emph{Congruence lattices of finite semimodular lattices.} 
Canad.\ Math.\ Bull. \tbf{41} (1998), 290--297.

\bibitem{GKn07}
G. Gr\"atzer and E. Knapp, 
\emph{Notes on planar semimodular lattices. I. Construction.}
Acta Sci. Math. (Szeged) \tbf{73} (2007), 445--462. 

\bibitem{GKn08}
\bysame, 
\emph{A note on planar semimodular lattices.}
Algebra Universalis \tbf{58} (2008), 497--499.

\bibitem{GKn08a}
\bysame,
\emph{Notes on planar semimodular lattices. II. Congruences.}
Acta Sci. Math. (Szeged) \tbf{74} (2008), 37--47.

\bibitem{GKn09}
\bysame,
\emph{Notes on planar semimodular lattices. III. Rectangular lattices.}
Acta Sci. Math. (Szeged) \tbf{75} (2009), 29--48.

\bibitem{GKn10}
\bysame,
\emph{Notes on planar semimodular lattices. IV. The size of a minimal congruence lattice representation with rectangular lattices.} 
Acta Sci. Math. (Szeged) \tbf{76} (2010), 3--26.

\bibitem{GN11}
G. Gr\"atzer and J.\,B. Nation, 
\emph{A new look at the Jordan-H\"older theorem for semimodular lattices.} 
Algebra Universalis \tbf{64} (2011), 309--311.

\bibitem{GS58d}
G. Gr\"atzer and E.\,T. Schmidt, 
\emph{Ideals and congruence relations in lattices.} 
 Acta Math. Acad. Sci. Hungar. \tbf{9} (1958), 137--175.

\bibitem{LTE1}
G. Gr\"atzer and F. Wehrung eds.,
Lattice Theory: Special Topics and Applications. Volume~ 1.
Birkh\"auser Verlag, Basel, 2014.

\bibitem{jJ55} 
J.~Jakub\'ik,
\emph{Congruence relations and weak projectivity in lattices,} 
(Slovak) 
\v{C}asopis P\v est.\ Mat.\ \tbf{80} (1955), 206--216. 
\end{thebibliography}
\end{document}